\documentclass[12pt,reqno]{amsart}

\newtheorem{theorem}{Theorem}[section]

\usepackage{amsmath,amssymb,amsfonts,}
\usepackage[dvips]{graphics}
\usepackage{amssymb}
\usepackage{amsfonts}
\usepackage{latexsym}
\usepackage{cite}
\usepackage{mathrsfs}

\theoremstyle{definition}

\theoremstyle{remark}

\theoremstyle{observation}

\theoremstyle{conjecture}

\theoremstyle{corollary}

\theoremstyle{plain}
\newtheorem{claim}{Claim}

\theoremstyle{proposition}

\theoremstyle{problem}

\numberwithin{equation}{section}

\large\normalsize
\RequirePackage[normalem]{ulem} 
\RequirePackage{color}\definecolor{RED}{rgb}{1,0,0}\definecolor{BLUE}{rgb}{0,0,1} 
\providecommand{\DIFaddbegin}{} 
\providecommand{\DIFaddend}{} 
\providecommand{\DIFdelbegin}{} 
\providecommand{\DIFdelend}{} 
\providecommand{\DIFaddbeginFL}{} 
\providecommand{\DIFaddendFL}{} 
\providecommand{\DIFdelbeginFL}{} 
\providecommand{\DIFdelendFL}{} 
\newcommand{\DIFscaledelfig}{0.5}
\RequirePackage{settobox} 
\RequirePackage{letltxmacro} 
\newsavebox{\DIFdelgraphicsbox} 
\newlength{\DIFdelgraphicswidth} 
\newlength{\DIFdelgraphicsheight} 
\LetLtxMacro{\DIFOincludegraphics}{\includegraphics} 
\newcommand{\DIFaddincludegraphics}[2][]{{\color{blue}\fbox{\DIFOincludegraphics[#1]{#2}}}} 
\newcommand{\DIFdelincludegraphics}[2][]{
\sbox{\DIFdelgraphicsbox}{\DIFOincludegraphics[#1]{#2}}
\settoboxwidth{\DIFdelgraphicswidth}{\DIFdelgraphicsbox} 
\settoboxtotalheight{\DIFdelgraphicsheight}{\DIFdelgraphicsbox} 
\scalebox{\DIFscaledelfig}{
\parbox[b]{\DIFdelgraphicswidth}{\usebox{\DIFdelgraphicsbox}\\[-\baselineskip] \rule{\DIFdelgraphicswidth}{0em}}\llap{\resizebox{\DIFdelgraphicswidth}{\DIFdelgraphicsheight}{
\setlength{\unitlength}{\DIFdelgraphicswidth}
\begin{picture}(1,1)
\thicklines\linethickness{2pt} 
{\color[rgb]{1,0,0}\put(0,0){\framebox(1,1){}}}
{\color[rgb]{1,0,0}\put(0,0){\line( 1,1){1}}}
{\color[rgb]{1,0,0}\put(0,1){\line(1,-1){1}}}
\end{picture}
}\hspace*{3pt}}} 
} 
\LetLtxMacro{\DIFOaddbegin}{\DIFaddbegin} 
\LetLtxMacro{\DIFOaddend}{\DIFaddend} 
\LetLtxMacro{\DIFOdelbegin}{\DIFdelbegin} 
\LetLtxMacro{\DIFOdelend}{\DIFdelend} 
\DeclareRobustCommand{\DIFaddbegin}{\DIFOaddbegin \let\includegraphics\DIFaddincludegraphics} 
\DeclareRobustCommand{\DIFaddend}{\DIFOaddend \let\includegraphics\DIFOincludegraphics} 
\DeclareRobustCommand{\DIFdelbegin}{\DIFOdelbegin \let\includegraphics\DIFdelincludegraphics} 
\DeclareRobustCommand{\DIFdelend}{\DIFOaddend \let\includegraphics\DIFOincludegraphics} 
\LetLtxMacro{\DIFOaddbeginFL}{\DIFaddbeginFL} 
\LetLtxMacro{\DIFOaddendFL}{\DIFaddendFL} 
\LetLtxMacro{\DIFOdelbeginFL}{\DIFdelbeginFL} 
\LetLtxMacro{\DIFOdelendFL}{\DIFdelendFL} 
\DeclareRobustCommand{\DIFaddbeginFL}{\DIFOaddbeginFL \let\includegraphics\DIFaddincludegraphics} 
\DeclareRobustCommand{\DIFaddendFL}{\DIFOaddendFL \let\includegraphics\DIFOincludegraphics} 
\DeclareRobustCommand{\DIFdelbeginFL}{\DIFOdelbeginFL \let\includegraphics\DIFdelincludegraphics} 
\DeclareRobustCommand{\DIFdelendFL}{\DIFOaddendFL \let\includegraphics\DIFOincludegraphics} 

\begin{document}
\begin{sloppypar}
\begin{center}

{\Large\bf An Ore-type condition for regular factors}
\\[20pt]
{Jingchao\ Lai, Weigen\ Yan*}\footnote{*Corresponding Author.\newline\hspace*{5mm}{Supported by NSFC Grant (Nos. 12571366, 12071180) .} \newline\hspace*{5mm}{Email address: jclai1999@163.com (J. Lai), weigenyan@263.net (W. Yan).} }
\\[10pt]
\footnotesize { 
School of Science, Jimei University, Xiamen 361021, China}\\
\end{center}

\title{}

\begin{abstract}
Let $G$ be a simple graph of order $n$ satisfying the following Ore-type condition: For any two nonadjacent vertices $x$ and $y$ of $G$, $d_G(x)+d_G(y)\geq n+k-2$, where $1\leq k\leq n-1$, $kn$ is even and $d_G(x)$ is the degree of $x$ in $G$. It is well known that $G$ has a $k$-factor for $k=1$ or $2$.
Lu and Ning (J. Graph Theory, 94(2020), 307-319) proved that if $k\geq n/2$, then $G$ has a $k$-factor. In this paper, we show that $G$ has a $k$-factor for any $1\leq k\leq n-1$.

{\sl Keywords:}\quad Regular factor; Ore-type condition; Tutte's theorem.\\
{\sl MSC(2020):}\quad 05C07; 05C70
\end{abstract}

\maketitle
\section{Introduction}

The graphs considered in this paper are simple if not specified. Let $G$ be a graph. Denote by $V(G)$, $E(G)$ and $\delta(G)$ the vertex set, the edge set and the minimum degree of $G$, respectively. Denote by $|V(G)|$ and $|E(G)|$ the {\em order} and {\em size} of $G$, respectively.
For a vertex $v\in V(G)$ and  a subgraph $H$ of $G$, denote by $N_G(v)$ the set of neighbors of $v$ in $G$, and $d_G(v)=|N_G(v)|$, and define $N_H(v)=N_G(v)\cap V(H)$ and $d_H(v)=|N_H(v)|$.
For two disjoint subsets $S,T$ of $V(G)$, let $E_G(S,T)$ denote the set of edges between $S$ and $T$ in $G$, and set $e_G(S,T)=|E_G(S,T)|$. If $S=\{v\}$, we write $E_G(v,T)$ and $e_G(v,T)$ in place of $E_G(\{v\},T)$ and $e_G(\{v\},T)$, respectively. For a positive integer $n$, set $[n]=\{1,2,\ldots,n\}$.
Other terminology and notation not defined here can be found in \cite{BM08}.

The existence of prescribed subgraphs is a central and long-standing theme in graph theory. Perhaps the best-known result is Dirac's theorem \cite{Dirac1952} in 1952, which states that every graph of order $n$ contains a Hamilton cycle if $\delta(G)\ge n/2$. Ore \cite{Ore1960} later generalized Dirac's theorem by imposing a degree sum lower bound on all pairs of nonadjacent vertices in a graph.

Let $k\geq 1$ be an integer. A {\em $k$-factor} of a graph $G$ is a spanning $k$-regular subgraph of $G$. The seminal work of Tutte in 1952 established a complete necessary and sufficient condition for the existence of $k$-factors, laying the theoretical foundation for the entire field. However, Tutte's criterion requires verifying a combinatorial inequality over all pairs of disjoint vertex subsets, which is computationally prohibitive for large graphs. This inherent limitation has motivated decades of research devoted to deriving simple, easily verifiable sufficient conditions for the existence of $k$-factors. Among all categories of sufficient conditions, degree-based criteria constitute the most extensively studied and fruitful class. Following the classical Dirac-type minimum degree paradigm, the Ore-type framework - which imposes a lower bound on the degree sum of every pair of nonadjacent vertices - also serves as a highly flexible and powerful research tool.

For a graph $G$, the invariant $\sigma_2(G)$ is defined to be the minimum degree sum of two nonadjacent vertices of $G$, i.e.,
$$\sigma_2(G)=\mathrm{min}\{d_G(x)+d_G(y):x,y\in V(G),x\neq y,xy\notin E(G)\}$$
if $G$ is not complete; otherwise, let $\sigma_2(G)=+\infty$.

A classical result of Ore \cite{Ore1960} says that a simple graph $G$ of order $n$ has a Hamilton path (hence $G$ has a 1-factor when $n$ is even) if $\sigma_2(G)\geq n-1$ and $G$ has a Hamilton cycle (i.e., a $2$-factor) if $\sigma_2(G)\geq n$. {\bf A natural problem is: Suppose that a simple graph $G$ of order $n$ satisfies the following Ore-type condition: $\sigma_2(G)\geq n+k-2$, where $1\leq k\leq n-1$ and $kn$ is even. Has $G$ a $k$-factor?}

Several progressively refined degree-based results concerning the existence of $k$-factors are well-known, as stated below.
\begin{theorem}\label{Nishimura}
Let $k \ge 2$ be an integer, and $G$ be a connected graph of order $n$ such that $\delta(G) \ge k$ and $kn$ is even. If one of the following three conditions holds, then $G$ has a $k$-factor.\\
{\em(1).} $\delta(G) \ge n/2$ and $n \ge 4k-5$. {\em (Egawa and Enomoto~\cite{EE1989}; Katerinis~\cite{K1985})}\\
{\em(2).} $\sigma_2(G)\geq n$ and $n \ge 4k-5$.  {\em (Iida and Nishimura~\cite{IN1991})}\\
{\em(3).} $\max\{d_G(x), d_G(y)\} \ge n/2$ for every pair of nonadjacent vertices $x$ and $y$ of $G$ and $n \ge 4k-3$. {\em (Nishimura~\cite{N1992})}.
\end{theorem}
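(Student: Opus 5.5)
We would derive all three parts from Tutte's $k$-factor theorem. For disjoint $S,T\subseteq V(G)$, let $h(S,T)$ denote the number of components $C$ of $G-(S\cup T)$ with $k|C|+e_G(C,T)$ odd. Set
$$\theta(S,T)=k|S|-k|T|+\sum_{x\in T}d_{G-S}(x)-h(S,T).$$
Then $G$ has a $k$-factor if and only if $\theta(S,T)\ge 0$ for all such pairs. Since $\theta(S,T)\equiv kn\pmod 2$ and $kn$ is even, it suffices to rule out $\theta(S,T)\le -2$.

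\textbf{Reductions.} Suppose $G$ has no $k$-factor. Among all pairs with $\theta(S,T)\le-2$, choose one with $|S\cup T|$ maximal and then $|T|$ minimal. The standard exchange arguments then give:
\begin{itemize}
\item $T\neq\emptyset$;
\item every vertex of $T$ satisfies $d_{G-S}(x)\le k-1$, since otherwise removing $x$ from $T$ would not increase $\theta$;
\item every component of $G-(S\cup T)$ counted by $h$ sends edges to $T$ or is small.
\end{itemize}
Let $s=|S|$, $t=|T|$, $w=n-s-t$, and $h=h(S,T)$. Fix $x_0\in T$ of minimum degree $\delta_0:=d_{G-S}(x_0)$. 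Then $\theta\le-2$ gives
$$ks-kt+t\delta_0-h\le -2.$$

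\textbf{The counting step.} The key observation is that each odd component $C$ has every vertex of small degree relative to $T$ when $C$ is small. Also $d_G(x_0)\le s+\delta_0$, and vertices in distinct components are nonadjacent. We split into two cases.
\begin{itemize}
\item $h=0$. Then $k(s-t)+t\delta_0\le -2$ forces $\delta_0\le k-1$ and $t>s$. Since $T$ contains two nonadjacent vertices when $t$ is large (their $G-S$-degrees are less than $t$), we get $\sigma_2\le 2(s+\delta_0)$. In case (2) this gives $n\le 2s+2k-2$, and combining it with $t\ge s+1$ and $n\ge s+t$ yields a contradiction. In case (3) we instead use $\max\{d(x),d(y)\}\ge n/2$, which yields $s+k-1\ge n/2$.
\item $h\ge1$. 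Pick a vertex $u$ in a smallest counted component $C$. Then $d_G(u)\le s+e(u,T)+|C|-1$, and $x_0u\notin E$ if possible. Together with $h\le w/|C|$, the degree condition bounds $h$ from above in terms of $n,s,\delta_0$. Substituting into the inequality $ks-kt+t\delta_0-h\le -2$ leaves a quadratic-type inequality in $s,t,\delta_0$. It must be shown infeasible when $n\ge 4k-5$ (resp. $4k-3$).
\end{itemize}

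\textbf{Order of proof.} We would first prove (3), since (2) implies the hypothesis of (3), because $d(x)+d(y)\ge n$ forces $\max\ge n/2$. The only gap is the range $n=4k-5,4k-4$, which would be treated directly with the stronger $\sigma_2$ hypothesis. Part (1) is a special case of (2).

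\textbf{Main obstacle.} The hardest step is the $h\ge1$ case with $\delta_0$ close to $k-1$ and $t\approx s$. There the linear estimates are too weak, and we must optimize $t(k-\delta_0)$ against $h$ using $w\ge h$ and $w+s+t=n$. This is exactly where the threshold $n\ge 4k-5$ (or $4k-3$) enters, via the extremal configuration $s=k-2$, $t=k+\dots$. We would first try to bound $t(k-1-\delta_0)+h$ by comparing the degree of $x_0$ with that of a vertex in a singleton odd component.
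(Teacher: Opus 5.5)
The paper does not prove this statement; it is quoted as background from Egawa--Enomoto, Katerinis, Iida--Nishimura and Nishimura. Your proposal therefore has to stand on its own, and as written it does not.

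The first problem is the reduction you call standard. Deleting $x$ from $T$ merges $x$ with every component of $G-S-T$ it touches, so $\theta(S,T\setminus\{x\})-\theta(S,T)=k-d_{G-S}(x)+h(S,T)-h(S,T\setminus\{x\})$. Here $h$ can drop by as many as the number of odd components $x$ meets. For example, take $k=2$ and join $x$ by a single edge to each of $m\ge 2$ disjoint copies of $K_4$; each copy is odd, since $2\cdot 4+1$ is odd. Then $\theta(\emptyset,\{x\})=-2+m-m=-2$ while $\theta(\emptyset,\emptyset)=0$, although $d_G(x)=m$. The most this exchange gives is $d_T(x)\le k-1$. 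The Katerinis--Woodall refinement (Theorem 2.2 in the paper) constrains the vertices of the odd components, not those of $T$.

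Similarly, $T\neq\emptyset$ is not an exchange fact. For odd $k$, pairs $(S,\emptyset)$ with $h\ge ks+2$ are genuine barriers in general, so $T\neq\emptyset$ has to be derived from the degree hypothesis. Your $h=0$ case uses the false bound to find a low-degree non-neighbour of $x_0$ in $T$. Compare the paper's device in its own main proof: it chooses $u_2$ of minimum degree in $T\setminus(N_T(u_1)\cup\{u_1\})$, which gives a nonadjacent pair without any such bound. Even granting your bound, $n\le 2s+2k-2$ together with $n\ge s+t\ge 2s+1$ is not a contradiction.

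More seriously, the case $h\ge 1$ is not carried out. This is where all the difficulty lies, and the only place the thresholds $n\ge 4k-5$ and $n\ge 4k-3$ could enter. Specifically:
\begin{itemize}
\item You say a quadratic-type inequality ``must be shown infeasible'' but never write it down.
\item The extremal configuration is left as ``$s=k-2$, $t=k+\dots$''.
\item The subcase where $x_0$ is adjacent to every vertex of the odd components is not treated; there your nonadjacent pair $x_0,u$ does not exist.
\item The bound $d_G(u)\le s+e_G(u,T)+|C|-1$ is useless without something like $e_G(u,T)\le k-1$, which your vague third bullet does not supply.
\end{itemize}

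Finally, in part (3) the hypothesis $\max\{d_G(x),d_G(y)\}\ge n/2$ gives no lower bound on any degree sum. It only says that the vertices of degree less than $n/2$ induce a clique. So the pair-counting that drives your $h\ge 1$ step is unavailable there, and you propose no substitute. Since you deduce (1) and most of (2) from (3), all three parts remain unproved. What you have is an outline of where a Tutte-based proof would start, not a proof.
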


By the theorem above, if $G$ satisfies the Ore-type condition above, then $G$ has a $k$-factor if $k\leq (n+5)/4$. On the other hand, Lu and Ning \cite{LN2019} obtained the following result for the existence of large $k$-factors.
\begin{theorem}[\upshape\cite{LN2019}]\label{LN2019}
Let $n$ and $k$ be two integers such that $1\leq k \leq n-1$ and $kn$ is even. Let $G$ be a graph of order $n$.  If $\sigma_2(G)\geq n+k-2$ and $k\geq n/2$, then $G$ contains a $k$-factor.
\end{theorem}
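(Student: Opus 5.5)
We argue via Tutte's $k$-factor theorem. For disjoint $S,T\subseteq V(G)$ put $s=|S|$, $t=|T|$, $U=V(G)\setminus(S\cup T)$ and
$$\delta(S,T)=ks-kt+\sum_{v\in T}d_{G-S}(v)-q(S,T),$$
where $q(S,T)$ is the number of components $C$ of $G[U]$ with $k|C|+e_G(C,T)$ odd. By Tutte, $G$ has a $k$-factor iff $\delta(S,T)\ge 0$ for all such pairs. Since $\delta(S,T)\equiv kn \pmod 2$ and $kn$ is even, it suffices to rule out $\delta(S,T)\le -2$.

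Suppose some pair has $\delta(S,T)\le -2$, and among all such pairs choose one with $t$ minimal; the minimality will be used to remove awkward vertices from $T$. Let $q=q(S,T)$; trivially $q\le |U|=n-s-t$.
\begin{itemize}
\item If $t=0$, then $\delta=ks-q$. Here $\sigma_2(G)\ge n-1$ forces $G$ to be connected and, more generally, $G-S$ has few components, so $ks\ge q$ except possibly when $s=0$.
\item If $s=t=0$, then $q$ counts components $C$ with $k|C|$ odd. Since $G$ is connected and $kn$ is even, $q=0$.
\end{itemize}
So $t\ge 1$ from now on.

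The main tool is the Ore condition, used in two ways. First, the vertices of degree less than $(n+k-2)/2$ induce a clique, so at most one ``class'' of low-degree vertices exists. Second, if $x\in T$ and $u\in U$ are nonadjacent, then
$$d_{G-S}(x)\ \ge\ n+k-2-s-d_G(u).$$
If $q\ge 1$, pick a component $C$ counted by $q$ and a vertex $u\in C$. Then $d_G(u)\le s+t+|C|-1$, and if $x\in T$ misses $u$ we get $d_{G-S}(x)\ge n+k-1-2s-t-|C|$. This is large unless $C$ is large, and a large $C$ makes $q$ small, since $q\le$ (number of components) $\le (n-s-t)/|C|$.

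Summing over $T$ then gives a lower bound on $\sum_{v\in T}d_{G-S}(v)$ of roughly $t(n+k-2s-t-c)$ with $c$ the smallest component size. We compare this with $kt-ks+q$. The hypothesis $k\ge n/2$ is used exactly here: it makes $k-t$ and $k-s$ large relative to $n-s-t$. For instance, if $t\le k$ then $ks-kt+t(\cdot)$ is easily nonnegative, while $t>k\ge n/2$ forces $s$ and $U$ to be small.

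The separate case in which every vertex of $T$ is adjacent to all of $U$ is easy, because then $d_{G-S}(x)\ge n-s-t$ for every $x\in T$. If $T$ is not a clique, we apply the Ore condition to a nonadjacent pair inside $T$.

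I expect the main obstacle to be the intermediate regime where $t$ is close to $k$ and $U$ consists of several small odd components. In that regime the per-vertex bound on $d_{G-S}$ is weak for a few vertices of $T$. For that regime I would first try to use the minimality of $t$: moving a vertex $x\in T$ with small $d_{G-S}(x)$ into $U$ changes $\delta$ by $k-d_{G-S}(x)+(\text{change in } q)$. This change is nonnegative when $d_{G-S}(x)\le k-1$, which contradicts the choice of the pair. Hence every $x\in T$ has $d_{G-S}(x)\ge k$, giving $\sum_{v\in T}d_{G-S}(v)\ge kt$ and thus $\delta\ge ks-q$.

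It then remains to show $q\le ks$, using connectivity-type consequences of $\sigma_2\ge n+k-2$. Specifically, two vertices in different components of $G-(S\cup T)$ are nonadjacent, so their degree sum is at most $2s+2t+|C_1|+|C_2|-2$. This forces $s+t$ to be large whenever $q\ge 2$, and the bound $q\le ks$ then follows.
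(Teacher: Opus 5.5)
\textbf{There is a genuine gap.} It sits at the step where minimality of $t$ is used to conclude $d_{G-S}(x)\ge k$ for every $x\in T$. Moving $x$ from $T$ into the rest gives
\[
\delta(S,T\setminus\{x\})=\delta(S,T)+k-d_{G-S}(x)-\bigl(q(S,T\setminus\{x\})-q(S,T)\bigr).
\]
Since $x$ merges with its neighbouring components into one component, $q$ rises by at most one, so for $d_{G-S}(x)\le k-1$ the change is indeed $\ge 0$. But that only says the new pair is a weaker barrier than the old one. It does not produce a pair with $\delta\le -2$ and fewer vertices in $T$, so nothing contradicts your choice; the inference runs the wrong way.

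The conclusion is in fact false for barriers in general. For $k\ge 3$, take $K_{k-1}+(K_{n-k}\cup K_1)$, with $v$ the vertex of $K_1$. Then $(\emptyset,\{v\})$ has $\delta=-2$. Deleting any vertex set leaves at most two components, so no pair with $t=0$ has $\delta\le -2$. Nor does any pair $(S,\{y\})$ with $d_{G-S}(y)\ge k$: it would need $q\ge ks+2$, hence $s=0$ and $G-y$ disconnected, which is impossible. So every barrier of minimum $t$ has a $T$-vertex of degree at most $k-1$ in $G-S$. Your derivation of the property never uses $\sigma_2(G)\ge n+k-2$, so it cannot be correct; any valid argument at this point must use the Ore condition essentially.

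\textbf{What the paper does instead.} Low-degree vertices of $T$ are exactly the obstruction, as the extremal example shows. Handling them is the whole content of the paper's proof. That proof gives the statement as the special case $k\ge n/2$ of Theorem~\ref{main} and never uses $k\ge n/2$.

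The paper starts from the Katerinis--Woodall barrier. There every $v\in V(U)$ has $e_G(v,T)\le k-1$ and $d_{G-S}(v)\ge k+1$, and every $k$-odd component has at least $3$ vertices. These properties make the analogue of your last step work: Claim 2 gets $q<ks+2$ from $k$-connectivity and the $t$-free bound $d_G(x_i)\le c_i+s+k-2$. Your version keeps $t$ in the degree bound and is only asserted. For instance, with $s=0$ you would need $q\le 1$, which ``$s+t$ is large'' does not give.

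From $q<ks+2$ the paper gets $h_1=\min_{x\in T}d_{G-S}(x)\le k-1$ and takes $u_1$ attaining it. The argument then runs as follows.
\begin{enumerate}
\item It proves $s\ge k-h_1+3(q-1)$ whenever some vertex of $U$ misses $u_1$.
\item It splits according to whether $T=P$, where $P=(N_G(u_1)\cap T)\cup\{u_1\}$, and whether $h_2=\min_{x\in T\setminus P}d_{G-S}(x)$ is at least $k$.
\item In the hard case it applies the Ore condition to the nonadjacent pair $u_1,u_2\in T$. It combines $s\ge (n+k-2-h_1-h_2)/2$ with $s\ge k-h_1$ using the weight $\lambda=h_2/(2k-h_2)$, and reaches $f\ge 1+(a-b)^2+ab$.
\end{enumerate}

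Your proposal has no substitute for this analysis. The paragraph about summing the Ore bound over $T$ and using $k\ge n/2$ is never carried out, and the minimality shortcut meant to bypass it does not work.
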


Hence Theorems \ref{Nishimura} and \ref{LN2019} partially answer the problem above. In this paper, we obtain the following result which completely solves this problem.

\begin{theorem}\label{main}
Let $n$ and $k$ be two integers such that $1\leq k \leq n-1$ and $kn$ is even. Let $G$ be a graph of order $n$.  If $\sigma_2(G)\geq n+k-2$, then $G$ contains a $k$-factor.
\end{theorem}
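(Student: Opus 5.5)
We argue by contradiction using Tutte's $k$-factor theorem. Suppose $G$ has no $k$-factor. Then there exist disjoint sets $S,T\subseteq V(G)$ with
$$\delta(S,T)=k|S|-k|T|+\sum_{v\in T}d_{G-S}(v)-q(S,T)<0,$$
where $q(S,T)$ is the number of components $C$ of $G-(S\cup T)$ with $k|C|+e_G(C,T)$ odd. Since $kn$ is even, $\delta(S,T)\equiv kn\equiv 0 \pmod 2$, so in fact $\delta(S,T)\le -2$. We fix such a pair with $|S|+|T|$ minimal (or $|T|$ maximal), which gives the standard structure: each vertex of $T$ has at most $k-1$ neighbours in $G-S$, and each vertex of $U=V(G)-(S\cup T)$ has at least $k$ neighbours in $G-S$ or can be moved. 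Put $s=|S|$, $t=|T|$, $u=|U|$. Note $T\neq\emptyset$, since otherwise $\delta\ge ks-q\ge 0$ when $s\ge 1$ (each odd component has at least one vertex adjacent... more carefully, $q\le u$ must be ruled out), and the case $S=T=\emptyset$ is trivial.

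The key degree input is the Ore condition. Since $G$ is not complete (else $K_n$ has a $k$-factor), we have $\delta(G)\ge k$. Indeed, if $d(x)<k$, then $x$ has at least $n-k$ non-neighbours $y$, each with $d(y)\ge n+k-2-d(x)$, and this contradicts itself because $d(y)\le n-1$ forces $d(x)\ge k-1$. A finer count handles the equality case. Let $h=\min_{v\in T} d_{G-S}(v)$, attained at $x\in T$, so $d_G(x)\le s+h$. Then $x$ is nonadjacent to at least $n-1-s-h$ vertices, each of degree at least $n+k-2-s-h$. Two cases arise.

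\emph{Case 1: $T$ is a clique in $G-S$, or all of $T$ is adjacent to $x$.} Then $t\le h+1\le k$, and we bound $q\le u$ and use $\sum_{v\in T}d_{G-S}(v)\ge th$ to get $\delta\ge ks-kt+th-q$.

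\emph{Case 2: there is $y\in T$ nonadjacent to $x$.} Then $d(x)+d(y)\ge n+k-2$ gives $d_{G-S}(x)+d_{G-S}(y)\ge n+k-2-2s$. Ordering the vertices of $T$ by $d_{G-S}$ and summing the Ore inequality over nonadjacent pairs yields roughly $\sum_{v\in T}d_{G-S}(v)\ge \frac{t}{2}(n+k-2-2s)-O(t^2)$.

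The components counted by $q$ are controlled similarly. A vertex $w$ in a small component $C$ of $G-(S\cup T)$ satisfies $d(w)\le s+t+|C|-1$, and $w$ is nonadjacent to $x$ (if $x\notin N(w)$), giving $s+t+|C|-1+s+h\ge n+k-2$. This forces either $q$ to be tiny ($q\le 1$ or $2$) or $s$ to be large, namely $2s+t+h\ge n+k-1-|C|$.

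The main obstacle is combining these inequalities into $\delta(S,T)\ge 0$ across all ranges of $k$, especially the middle range $(n+5)/4<k<n/2$ that is not covered by Theorems \ref{Nishimura} and \ref{LN2019}. In that range neither $s$ nor $t$ is forced small, and the $O(t^2)$ loss from nonadjacent pairs inside $T$ hurts. My first attempt would be to write $\delta(S,T)$ as a quadratic in $t$ with parameters $s,h$. Using $n\ge s+t+q$, I would show the quadratic is nonnegative at the extremes $t=h+1$ and $t=n-s$, then conclude by convexity, following the style of Lu and Ning's argument for $k\ge n/2$. Finally, the parity slack $\delta\le -2$ absorbs the off-by-one losses in the boundary cases.
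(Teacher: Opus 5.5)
Your proposal is an outline rather than a proof. It stops where the real work begins, and you say yourself that turning the inequalities into $\delta(S,T)\ge 0$ for $(n+5)/4<k<n/2$ is unresolved.

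Beyond that, the facts you start from are not available as stated. Minimality of $|S|+|T|$ does not give $d_{G-S}(v)\le k-1$ for all $v\in T$. For example, take $k=2$ and a vertex $x$ joined by single edges to three disjoint triangles. Then $(\emptyset,\{x\})$ is the only pair with $|S|+|T|\le 1$ and $\delta\le -2$, yet $d_{G-S}(x)=3$. Your argument for $T\neq\emptyset$ needs $q\le ks+1$. That fails for general graphs and is exactly what the Ore condition has to supply.

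Most importantly, you have no real control on $q$. The bound $q\le u$ you plan to use in Case 1 is far too weak on its own. With $t=h+1$ and $s=k-h$ it gives only $\delta\ge (k-h)^2+h+1-n$, which is negative for large $n$. The paper gets this control from the Katerinis--Woodall refinement (Theorem \ref{KW1987}). One may assume every vertex $v$ of a $k$-odd component has $e_G(v,T)\le k-1$ and $d_{G-S}(v)\ge k+1$, and every $k$-odd component has at least $3$ vertices. With these facts:
\begin{itemize}
\item Applying Ore to two vertices in different odd components, together with $k$-connectivity, gives $q<ks+2$. Hence $T\neq\emptyset$ and $h_1=\min_{x\in T}d_{G-S}(x)\le k-1$.
\item Applying Ore to the minimizer $u_1$ and a vertex of $U$ not adjacent to it gives $s\ge k-h_1+3(q-1)$.
\item This last bound is what kills your Case 1. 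There, $d_{G-S}\ge k+1$ on $U$ rules out $V(U)\subseteq N_G(u_1)$.
\end{itemize}

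Your plan for the sum over $T$ is also not carried out. You propose to sum Ore over many nonadjacent pairs, absorb an $O(t^2)$ loss, and check a quadratic in $t$ at its endpoints. The quadratic is never identified. Also, $s$, $t$ and $q$ are coupled through $n\ge s+t+3q$, so an endpoint check in $t$ alone settles nothing.

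The paper avoids any such loss by using a single Ore pair inside $T$. Let $P=(N_G(u_1)\cap T)\cup\{u_1\}$, so $|P|\le h_1+1$, and let $u_2$ minimize $d_{G-S}$ on $T\setminus P$, with value $h_2$. Then
\[
\sum_{x\in T}d_{G-S}(x)\ge h_1|P|+h_2(t-|P|) \quad\text{and}\quad 2s\ge n+k-2-h_1-h_2 .
\]
\begin{itemize}
\item If $h_2\ge k$, an argument of the same kind as in Case 1 finishes.
\item If $h_2\le k-1$, combine $s\ge k-h_1$ and $2s\ge n+k-2-h_1-h_2$ with weights $\lambda=h_2/(2k-h_2)$ and $1-\lambda$, and use $t\le n-s-3q$. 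The Tutte inequality then becomes $0\ge 1+a^2-ab+b^2$ with $a=k-1-h_1\ge b=k-1-h_2\ge 0$, a contradiction.
\end{itemize}
This weighted combination is the step that covers the middle range of $k$ left open by Theorems \ref{Nishimura} and \ref{LN2019}. Nothing in your proposal replaces it.
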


The assumption that $\sigma_2(G)\geq n+k-2$ in Theorem \ref{main} cannot be weakened any further. Let $G=K_{k-1}+(K_{n-k}\cup K_1)$, where the plus sign denotes join. Obviously, $G$ contains no $k$-factor. However, it is easy to check that $\sigma_2(G)= n+k-3$.

The main tools in the work of Lu and Ning \cite{LN2019} are Tutte's $k$-factor theorem and the Karush-Kuhn-Tucker theorem. Our main technique is to optimize and simplify the method of Lu and Ning. The rest of this paper is organized as follows. In Section 2, we introduce necessary preliminaries. In Section 3, we complete the proof of Theorem \ref{main}.

\section{Preliminaries}
In this section, we first introduce some notation and terminology related to the Tutte's $k$-factor theorem. For a graph $G$ and any pair of disjoint subsets $S,T\subset V(G)$, a component $C$ of $G-S-T$ is called a \textit{$k$-odd-component} if
\[
e_G(V(C),T)+k\,|V(C)|\equiv 1 \pmod{2}.
\]
Let $q(S,T)$ denote the number of $k$-odd components of $G-S-T$. Unless otherwise specified, we always let $C_1,\ldots,C_q$ be all $k$-odd-components of $G-S-T$, and let $U=\bigcup_{i=1}^qC_i$, where $q=q(S,T)$.

The well-known Tutte's $k$-factor theorem is stated as follows.

\begin{theorem}[Tutte \cite{Tutte1952}]\label{Tutte1952}
Let $k$ be a positive integer. A graph $G$ contains no $k$-factor if and only if there exist disjoint subsets $S,T\subset V(G)$, such that
\begin{equation}\label{E1}
k\,|S|-k\,|T|+\sum_{x\in T}d_{G-S}(x)-q(S,T)\le -2.
\end{equation}
\end{theorem}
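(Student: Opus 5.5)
Since Theorem \ref{Tutte1952} is the classical criterion, I would prove it in two parts. Necessity is a direct counting argument. Sufficiency is a reduction to Tutte's $1$-factor theorem through Tutte's gadget construction. Throughout write
$$\delta(S,T)=k|S|-k|T|+\sum_{x\in T}d_{G-S}(x)-q(S,T).$$

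\emph{Parity and necessity.} First I would show $\delta(S,T)\equiv kn \pmod 2$ for all disjoint $S,T$.
\begin{itemize}
\item Let $R=V(G)\setminus(S\cup T)$ and split $R$ into the components of $G-S-T$.
\item Then $\sum_{x\in T}d_{G-S}(x)=2e_G(T)+e_G(T,R)$, and $e_G(T,R)=\sum_C e_G(C,T)$.
\item By definition $q(S,T)\equiv\sum_C\bigl(e_G(C,T)+k|C|\bigr) \pmod 2$, the sum running over all components $C$.
\item Substituting, $\delta(S,T)\equiv k(|S|+|T|+|R|)=kn\equiv 0 \pmod 2$.
\end{itemize}
Hence "$\delta(S,T)\le -2$" is equivalent to "$\delta(S,T)<0$".

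Now let $F$ be a $k$-factor. For a $k$-odd component $C$ we have $k|C|=2e_F(C)+e_F(C,S)+e_F(C,T)$. Comparing parities gives $e_F(C,S)+e_F(C,T)\not\equiv e_G(C,T)$. So either $e_F(C,S)\ge 1$, or $C$ sends at least one edge of $E_G(C,T)\setminus E(F)$ to $T$. We also have the two bounds
$$k|S|\ge e_F(S,V\setminus S),\qquad k|T|=\sum_{x\in T}d_{F-S}(x)+e_F(T,S).$$
Combining them,
$$\delta(S,T)+q\ \ge\ e_F(S,U)+\sum_{x\in T}\bigl(d_{G-S}(x)-d_{F-S}(x)\bigr).$$
The right-hand side is at least the number of $k$-odd components, since each one is charged at least once. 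So $\delta(S,T)\ge 0$, which proves necessity.

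\emph{Sufficiency via a gadget.} Assume $G$ has no $k$-factor. We may assume $d_G(v)\ge k$ for all $v$; otherwise $S=\emptyset$, $T=\{v\}$ already gives $\delta<0$, using $q\ge 0$ and parity. Build $G^*$ as follows.
\begin{itemize}
\item For each edge $uv$, introduce adjacent vertices $e^{uv}_u$ and $e^{uv}_v$.
\item Replace each vertex $v$ by $d_G(v)-k$ "core" vertices.
\item Join each core vertex of $v$ to every $e^{uv}_v$.
\end{itemize}
A perfect matching of $G^*$ corresponds to a $k$-factor of $G$: an edge $uv$ belongs to the factor exactly when $e^{uv}_u e^{uv}_v$ is matched. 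So $G^*$ has no perfect matching. By Tutte's $1$-factor theorem there is a barrier $X\subseteq V(G^*)$ with $o(G^*-X)-|X|\ge 2$.

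\emph{Normalizing the barrier (main obstacle).} The key step is to choose a barrier $X$ with maximum deficiency $o(G^*-X)-|X|$, and subject to that, minimal or extremal in a suitable sense. I then want to show $X$ can be put in a canonical form: for each $v$, exactly one of the following holds.
\begin{itemize}
\item All core vertices of $v$ lie in $X$ and no $e_v$ lies in $X$; put $v\in S$.
\item All $e^{uv}_v$ lie in $X$ and no core vertex of $v$ lies in $X$; put $v\in T$.
\item $X$ meets nothing of $v$'s gadget; put $v\in R$.
\end{itemize}
This is done by local exchange arguments. For instance, if some but not all core vertices of $v$ are in $X$, removing them from $X$ does not decrease the deficiency, because the remaining core vertices are interchangeable. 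Similar exchanges handle the edge vertices $e^{uv}_v$, comparing the counts $d_G(v)$ versus $d_G(v)-k$. I expect this case analysis, especially the mixed cases where $v$'s gadget is partly in $X$, to be the hardest part. I would model it on the standard proof of Tutte's $f$-factor theorem.

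Once $X$ is canonical, count the odd components of $G^*-X$.
\begin{itemize}
\item Components of $G^*-X$ inside the gadgets of $R$ correspond to components of $G-S-T$.
\item The parity of such a component is $e_G(C,T)+k|C| \pmod 2$, so the odd ones are exactly the $k$-odd components.
\item The remaining odd components are single edge vertices $e^{xy}_y$ with $x\in T$ and $y\in T$ or $y\in S$; their contributions, together with $|X|$, reproduce the terms $k|S|$, $-k|T|$ and $\sum_{x\in T}d_{G-S}(x)$.
\end{itemize}
This yields $o(G^*-X)-|X|=-\delta(S,T)$. Hence $\delta(S,T)\le -2$, as required.
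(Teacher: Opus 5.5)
The paper quotes this theorem from Tutte and gives no proof, so your proposal is judged on its own. Your parity computation and the necessity direction are correct. One caveat: the ``$\le -2$'' form requires $kn$ even. For $G=K_1$ and $k=1$, every pair $S,T$ has $\delta(S,T)\ge -1$, yet there is no $1$-factor. You use $kn\equiv 0$ silently, so state it as a hypothesis.

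The sufficiency direction has a genuine gap. Its decisive step is only announced, and the dictionary you attach to it is reversed. Write $K_v$ for the cores of $v$ and $E_v=\{e^{uv}_v: u\in N_G(v)\}$.
\begin{itemize}
\item If $E_v\subseteq X$ and $K_v\cap X=\emptyset$, the $d_G(v)-k$ cores of $v$ become isolated odd components.
\item If $K_v\subseteq X$ and $E_v\cap X=\emptyset$, each $e^{uv}_v$ is attached only to its partner. It is these vertices that hang as pendants on the components over $R$.
\end{itemize}
So you must set $S=\{v: E_v\subseteq X,\ K_v\cap X=\emptyset\}$ and $T=\{v: K_v\subseteq X,\ E_v\cap X=\emptyset\}$. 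Then $|X|=\sum_{v\in S}d_G(v)+\sum_{v\in T}(d_G(v)-k)$. The odd components are:
\begin{itemize}
\item the $\sum_{v\in S}(d_G(v)-k)$ isolated cores;
\item the $e_G(S,T)$ isolated vertices $e^{uv}_v$ with $u\in S$, $v\in T$;
\item the components over $R$ whose order, $\equiv k|C|+e_G(C,T) \pmod 2$, is odd.
\end{itemize}
This gives exactly $o(G^*-X)-|X|=-\delta(S,T)$; your list of odd components also omits the isolated cores.

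With your assignment the identity fails. Take $G=K_{1,3}$, $k=1$, with centre $c$. The barrier $X=E_c$ has deficiency $2$ and is already canonical. Your rule puts $c$ in $T$, and then $\delta(S,\{c\})=2$ however the leaves are assigned.

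There is a second problem. A vertex with $d_G(v)=k$ has no cores, so its gadget is disconnected. It cannot sit in $R$ without breaking the correspondence of components, and must go into $T$ (vacuously $K_v\subseteq X$). So $R$ must consist of the vertices with $d_G(v)>k$ and nothing in $X$.

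The normalisation you call the main obstacle is short if you only delete vertices from $X$ and treat the vertices of $G$ one at a time. Any barrier with $o(G^*-X)-|X|\ge 2$ will do; no extremal choice is needed.
\begin{itemize}
\item[(a)] If $K_v\subseteq X$ (automatic when $d_G(v)=k$), delete $E_v$ from $X$ one vertex at a time. Each $e^{uv}_v$ has at most one neighbour outside $X$, so it becomes a new odd singleton or flips the parity of one component. Hence $o(G^*-X)-|X|$ does not drop, and $v$ ends in $T$.
\item[(b)] If some core of $v$ is outside $X$ and $E_v\subseteq X$, delete the remaining cores. Each becomes an isolated vertex, gaining $2$, and $v$ ends in $S$.
\item[(c)] If some core and some vertex of $E_v$ are outside $X$, then $(K_v\cup E_v)\setminus X$ lies in a single component $Q$. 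Deleting a core flips the parity of $Q$. Deleting a vertex of $E_v$ merges $Q$ with at most one further component. A parity check shows the deficiency does not drop in either case, and $v$ ends in $R$ with $d_G(v)>k$.
\end{itemize}
Each step alters only the gadget of $v$, so earlier vertices stay canonical and the final barrier still has deficiency at least $2$.

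The case split is essential. If $K_v\subseteq X$ but $E_v\not\subseteq X$, deleting a core can merge several odd components at once and lower the deficiency. There you must delete the edge vertices instead, so the ``similar exchanges'' are not automatic.
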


From Tutte's theorem, Katerinis and Woodall \cite{KW1987} deduced the following.

\begin{theorem}[Katerinis and Woodall \cite{KW1987}]\label{KW1987}
Let $k\ge 1$ be an integer. If a graph $G$ contains no $k$-factor, then there exist two disjoint subsets $S,T\subset V(G)$ such that there holds {\em (\ref{E1})}, and for all $v\in V(U)$
\begin{equation}\label{E2}
e_G(v,T)\le k-1,\quad \text{and}
\end{equation}
\begin{equation}\label{E3}
d_{G-S}(v)\ge k+1,\quad \text{and}
\end{equation}
\begin{equation}\label{E4}
|V(C_i)|\geq 3 \quad \text{for every $i\in [q]$.}
\end{equation}
\end{theorem}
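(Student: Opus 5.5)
The plan is to start from Tutte's theorem (Theorem~\ref{Tutte1952}) and choose an extremal pair. Write
\[
\delta(S,T)=k|S|-k|T|+\sum_{x\in T}d_{G-S}(x)-q(S,T).
\]
Since $G$ has no $k$-factor, the set of disjoint pairs $(S,T)$ with $\delta(S,T)\le -2$ is nonempty. Among these pairs I will choose one with $|S|$ maximum and, subject to this, $|T|$ maximum. I will then show that if (\ref{E2}) or (\ref{E3}) fails, a single vertex can be moved into $S$ or into $T$ while keeping $\delta\le -2$, which contradicts the choice. Condition (\ref{E4}) will then follow directly from (\ref{E2}) and (\ref{E3}).

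\textbf{Parity step.} First I will show that $\delta(S,T)\equiv kn\equiv 0 \pmod 2$ for every disjoint pair $(S,T)$. Write $R=V(G)-S-T$.
\begin{itemize}
\item Each edge inside $T$ is counted twice in $\sum_{x\in T}d_{G-S}(x)$, so this sum is congruent to $e_G(T,R)=\sum_C e_G(V(C),T)$ modulo $2$, where $C$ runs over all components of $G-S-T$.
\item By the definition of a $k$-odd-component, $q(S,T)\equiv \sum_C\bigl(e_G(V(C),T)+k|V(C)|\bigr)\pmod 2$.
\item Combining these and using $-k|T|\equiv k|T| \pmod 2$ gives $\delta\equiv k|S|+k|T|+k|R|=kn \pmod 2$, which is even by hypothesis.
\end{itemize}
Consequently, any modification that increases $\delta$ by at most $1$ in fact does not increase it.

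\textbf{Proof of (\ref{E2}).} Suppose some $v\in V(C_i)$ has $e_G(v,T)\ge k$, and replace $S$ by $S\cup\{v\}$.
\begin{itemize}
\item The term $k|S|$ increases by $k$.
\item The sum $\sum_{x\in T}d_{G-S}(x)$ decreases by $e_G(v,T)$.
\item Only the component $C_i$ is affected. Its $k$-odd contribution of $1$ is replaced by the number of $k$-odd components of $C_i-v$, which is $\ge 0$. Hence $q$ drops by at most $1$.
\end{itemize}
So the change in $\delta$ is at most $k-e_G(v,T)+1\le 1$. By the parity step the change is at most $0$, so the new pair still satisfies (\ref{E1}) and has larger $|S|$. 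This contradicts the choice of $(S,T)$.

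\textbf{Proof of (\ref{E3}).} Suppose some $v\in V(C_i)$ has $d_{G-S}(v)\le k$, and replace $T$ by $T\cup\{v\}$.
\begin{itemize}
\item The term $-k|T|$ decreases by $k$.
\item The new term $d_{G-S}(v)$ is added to the sum. The degrees in $G-S$ of the old vertices of $T$ are unchanged, since $S$ is unchanged.
\item Only $C_i$ is affected, because $v$ has no neighbours in other components. So again $q$ drops by at most $1$.
\end{itemize}
The change in $\delta$ is at most $d_{G-S}(v)-k+1\le 1$, hence at most $0$ by parity. The new pair has the same $S$ and a larger $T$, contradicting the secondary maximality.

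\textbf{Proof of (\ref{E4}).} For $v\in V(C_i)$ we have $d_{G-S}(v)=e_G(v,T)+d_{C_i}(v)$. By (\ref{E2}) and (\ref{E3}),
\[
k+1\le (k-1)+(|V(C_i)|-1),
\]
so $|V(C_i)|\ge 3$.

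\textbf{Main obstacle.} The delicate point is the bookkeeping of $q$ under each move. Removing $v$ from a $k$-odd component can split it, and moving $v$ into $T$ changes the values $e_G(V(C),T)$ of the pieces of $C_i-v$. The argument relies only on the bound that $q$ decreases by at most one, with the parity lemma absorbing the resulting $+1$. I will check carefully that no other component's parity changes: this holds because $v$ is adjacent to no vertex outside $C_i\cup S\cup T$.
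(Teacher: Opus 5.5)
The paper does not prove this statement: it quotes it from Katerinis and Woodall \cite{KW1987} and uses it as a black box. There is therefore no in-paper argument to compare with. Your derivation from Theorem~\ref{Tutte1952} is correct and self-contained, and it is the standard way this lemma is obtained.

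\begin{itemize}
\item The parity identity $\delta(S,T)\equiv kn \pmod 2$ is verified correctly.
\item The bookkeeping for both one-vertex moves is right. Moving $v\in V(C_i)$ into $S$ changes $\delta$ by at most $k-e_G(v,T)+1$. Moving it into $T$ changes $\delta$ by at most $d_{G-S}(v)-k+1$. In both cases you correctly use that $v$ has no neighbours in components other than $C_i$. Hence no other component changes its parity, and $q$ drops by at most one.
\item Choosing $(|S|,|T|)$ lexicographically maximal excludes both moves.
\item (\ref{E4}) then follows from $d_{G-S}(v)=e_G(v,T)+d_{C_i}(v)$.
\end{itemize}

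One cosmetic point concerns the parity step. You only need that all values of $\delta$ share the parity of $kn$, so every change is even. You do not need $kn$ itself to be even, and the quoted statement carries no such hypothesis. This is harmless here. The ``$\le -2$'' form of Tutte's theorem you start from already presupposes $kn$ even: for $G=K_1$ and $k=1$ there is no $1$-factor, yet every pair has $\delta\ge -1$. And $kn$ even is the setting of the paper anyway.
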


\section{Proof of Theorem \ref{main}}

{\bf{Proof of Theorem \ref{main}}}
To prove the theorem, we argue by contradiction. Suppose that $G$ contains no $k$-factor. By Theorem \ref{KW1987}, we can choose disjoint $S,T\subset V(G)$ satisfying Ineqs. (\ref{E1})-(\ref{E4}). Let $C_1,\dots,C_q$ be all $k$-odd components of $G-S-T$, and let $U=\bigcup_{i=1}^qC_i$. Let $s=|S|$, $t=|T|$ and $c_i=|V(C_i)|$ for $i\in [q]$. Hence, by Ineqs. (\ref{E2})-(\ref{E4}), $e_G(v,T)\le k-1$  and $d_{G-S}(v)\ge k+1$ for every $v\in V(U)$, and $c_i\geq 3$ for every $i\in[q]$. So, $|V(U)|=\sum_{i=1}^qc_i\geq 3q$ and $n\geq s+t+3q$.

If $T\neq \emptyset$, then let $h_1:=\min\{d_{G-S}(x):x\in T\}$, and let $u_1\in T$ such that $d_{G-S}(u_1)=h_1$. Set $P=(N_G(u_1)\cap T)\cup\{u_1\}$ and $p=|P|$. If $T\backslash P \neq\emptyset$, then let $h_2:=\min\{d_{G-S}(x):x\in T\backslash P\}$ and choose $u_2\in T\backslash P$ such that $d_{G-S}(u_2)=h_2$. Obviously, $p=d_T(u_1)+1$, $u_1u_2\notin E(G)$ and $h_1\leq h_2$.

\begin{claim}\label{k-connected}
$G$ is $k$-connected; hence,  $\delta(G)\ge k$.
\end{claim}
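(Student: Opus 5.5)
Here $k$-connectivity is understood in the usual sense: $G$ has at least $k+1$ vertices, which holds because $n\ge k+1$, and $G$ has no vertex cut of size at most $k-1$. The plan is to derive this directly from the Ore-type condition $\sigma_2(G)\ge n+k-2$ by a neighbourhood-counting argument, without using $S$, $T$ or the Tutte structure. The minimum degree bound will then follow at once, since the neighbourhood of a vertex of degree less than $k$ in a non-complete graph is a vertex cut of size less than $k$, and a complete graph $K_n$ has $\delta=n-1\ge k$.

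For the connectivity, suppose $G$ is not complete and $X\subseteq V(G)$ is a vertex cut with $|X|\le k-1$. Choose $x$ and $y$ in different components of $G-X$; then $xy\notin E(G)$. Every neighbour of $x$ lies in $X$ or in the component $A$ of $G-X$ containing $x$. Likewise every neighbour of $y$ lies in $X$ or in its own component $B$, and $A\cap B=\emptyset$. Therefore
\[
d_G(x)+d_G(y)\le (|X|+|A|-1)+(|X|+|B|-1)\le |X|+(|X|+|A|+|B|)-2\le |X|+n-2\le n+k-3 .
\]
This contradicts $\sigma_2(G)\ge n+k-2$. Hence every vertex cut has at least $k$ vertices, so $G$ is $k$-connected.

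For $\delta(G)\ge k$, note that if $G=K_n$ then $\delta(G)=n-1\ge k$. Otherwise, for any vertex $v$ with $d_G(v)<n-1$, the set $N_G(v)$ separates $v$ from some non-neighbour, so $N_G(v)$ is a vertex cut and $d_G(v)\ge k$. A vertex with $d_G(v)=n-1$ trivially satisfies $d_G(v)\ge k$.

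No step is a real obstacle. The only point needing care is the counting in the display, which uses that $A$ and $B$ are disjoint and disjoint from $X$, giving $|X|+|A|+|B|\le n$.
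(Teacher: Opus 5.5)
Your proof is correct and is essentially the paper's argument: take $x,y$ in different components of $G-X$, bound $d_G(x)+d_G(y)\le 2|X|+|A|+|B|-2\le |X|+n-2$, and compare with $\sigma_2(G)\ge n+k-2$ to force $|X|\ge k$. Your explicit derivation of $\delta(G)\ge k$ (treating $K_n$ separately and using $N_G(v)$ as a vertex cut) spells out a step the paper states without comment.
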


\begin{proof}
If there exists a vertex subset $W$ such that $G-W$ is disconnected, then let $C_1',C_2'$ be two components of $G-W$. Let $x\in V(C_1')$ and $y\in V(C_2')$. Then we can see that $xy\notin E(G)$. Hence,
\[
n + k - 2 \le d_G(x) + d_G(y) \le |V(C_1')|-1+|W|+|V(C_2')| - 1 + |W|.
\]
Note that $n \ge |V(C_1')| + |V(C_2')| + |W|$. Hence, $|W| \ge k$, and moreover, $\delta(G) \ge k$.
\end{proof}

\begin{claim}\label{h1}
 We have $q<ks+2$; hence, $T\neq \emptyset$ and $h_1\leq k-1$.
\end{claim}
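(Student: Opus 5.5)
The plan is to argue by contradiction against the inequality $q\ge ks+2$, by playing the Ore-type condition on two vertices taken from two different $k$-odd components against the counting bound $n\ge s+t+\sum_i c_i$. The two consequences in Claim~\ref{h1} will then follow directly from Ineq.~(\ref{E1}).

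\emph{Step 1 (the main inequality).} If $q\le 1$, then $q<2\le ks+2$ and there is nothing to prove, so assume $q\ge 2$. Pick $x\in V(C_1)$ and $y\in V(C_2)$. These lie in different components of $G-S-T$, so $xy\notin E(G)$.
\begin{itemize}
\item The neighbours of $x$ lie in $C_1$, $S$ or $T$. Using (\ref{E2}) for the edges to $T$, we get $d_G(x)\le (c_1-1)+s+(k-1)$, and similarly $d_G(y)\le (c_2-1)+s+(k-1)$.
\item Adding these and using $\sigma_2(G)\ge n+k-2$ gives
\[
n\le c_1+c_2+2s+k-2 .
\]
\item For the lower bound on $n$, note that $G-(S\cup T)$ has at least two components. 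By Claim~\ref{k-connected}, $S\cup T$ is therefore a vertex cut of size at least $k$, so $s+t\ge k$. Together with (\ref{E4}) this gives
\[
n\ge s+t+c_1+c_2+3(q-2)\ge k+c_1+c_2+3(q-2).
\]
\item Comparing the two bounds yields $3(q-2)\le 2s-2$, that is, $q\le (2s+4)/3$.
\end{itemize}
Since $k\ge 1$, we have $(2s+4)/3<s+2\le ks+2$, which is the desired contradiction.

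\emph{Step 2 ($T\neq\emptyset$).} If $T=\emptyset$, then Ineq.~(\ref{E1}) reduces to $ks-q\le -2$, i.e.\ $q\ge ks+2$. This contradicts Step 1.

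\emph{Step 3 ($h_1\le k-1$).} Rewrite Ineq.~(\ref{E1}) as
\[
\sum_{x\in T}d_{G-S}(x)\le kt-ks+q-2 .
\]
By Step 1, $q-2<ks$, so the right-hand side is strictly less than $kt$. Since $h_1$ is the minimum of the $t$ summands, $t h_1<kt$. As $t\ge 1$ by Step 2, this gives $h_1<k$, and since $h_1$ is an integer, $h_1\le k-1$.

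The main obstacle is Step 1, specifically the case $s=0$. There the naive estimate $n\ge s+c_1+c_2+3(q-2)$ alone is not strong enough. Invoking the $k$-connectivity from Claim~\ref{k-connected} to get $s+t\ge k$ is what absorbs the $+k$ coming from the degree bound and closes the argument uniformly in $s$.
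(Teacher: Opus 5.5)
Your proof is correct. Steps 2 and 3 are exactly the paper's deductions from (\ref{E1}).

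Step 1 uses the same ingredients as the paper: the Ore condition on vertices of two distinct $k$-odd components, the bound $e_G(v,T)\le k-1$ from (\ref{E2}), $c_i\ge 3$ from (\ref{E4}), and $s+t\ge k$ from Claim~\ref{k-connected}. You combine them differently, and more simply.

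The paper assumes $q\ge ks+2$ and derives $c_i+c_j\ge n-k-2s+2$ for all $i\ne j$. It then orders the components so that, by averaging, $c_1+c_2\le \frac{2}{q}\sum_i c_i\le \frac{2}{q}(n-k)$. To reach a contradiction it also needs the auxiliary bound $n-k\ge 3ks+6$ and a short polynomial estimate showing $q(n-k-2s+2)-2(n-k)>0$.

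You instead keep $c_1+c_2$ explicitly in the lower bound $n\ge s+t+c_1+c_2+3(q-2)$, so it cancels against the upper bound $n\le c_1+c_2+2s+k-2$ from the Ore condition. You need no ordering, no averaging, and never use the hypothesis $q\ge ks+2$. The result is the direct estimate $q\le (2s+4)/3$ whenever $q\ge 2$. This bound does not depend on $k$ and is strictly stronger than $q<ks+2$, so Step 1 is really a direct proof rather than the contradiction you announce.

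Your remark about where connectivity is needed is also accurate. Without $t$ you would only get $q\le (s+k+4)/3$, which suffices when $s\ge 1$ but not when $s=0$ and $k\ge 2$.
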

\begin{proof}
If not, then $q\geq ks+2\geq 2$, which implies $G-S-T$ is not connected. That is, $S\cup T$ is a vertex-cut of $G$. By Claim \ref{k-connected}, we have $s+t\geq k$. Note that
\begin{equation}\label{n-k-0}
\sum_{i=1}^qc_i=|V(U)|\leq n-s-t\leq n-k.
\end{equation}
Recall that $|V(U)|\ge 3q$.  Then we have
\begin{equation}\label{n-k-1}
n-k\geq  3q\geq 3ks+6.
\end{equation}
For every $i\in[q]$, let $x_i\in V(C_i)$. Then $N_G(x_i)\subseteq (V(C_i)\setminus \{x_i\})\cup S\cup T$. By Ineq. (\ref{E2}),
\begin{equation*}
d_G(x_i)\leq c_i-1+s+e_G(x_i,T)\leq c_i+s+k-2.
\end{equation*}
Hence, for any $i,j\in[q]$ with $i\neq j$, we first have $x_ix_j\notin E(G)$ and then
\begin{equation*}
c_i+s+k-2+c_j+s+k-2\geq d_G(x_i)+d_G(x_j) \geq n+k-2.
\end{equation*}
That is, $c_i+c_j\geq n-k-2s+2$. Without loss of generality, we suppose that $c_1\leq c_2\leq\ldots\leq c_q$. It is clear that
\begin{equation*}
\frac{c_1+c_2}{2}\leq \frac{1}{q}\sum_{i=1}^qc_i\leq \frac{n-k}{q}.
\end{equation*}
where the second inequality holds by Ineq. (\ref{n-k-0}).  Hence we have
\begin{equation}
n-k-2s+2\leq c_1+c_2\leq \frac{2}{q}(n-k).
\end{equation}
That is, $q(n-k-2s+2)\leq 2(n-k)$. Note that by Ineq. (\ref{n-k-1}),
\begin{equation}
n-k-2s+2\ge 3ks+6-2s+2=(3k-2)s+8>0.
\end{equation}
However,
\begin{align*}
q(n-k-2s+2)-2(n-k)
&\ge (ks+2)(n-k-2s+2)-2(n-k) \tag{by $q\geq ks+2$}\\
&= ks(n-k)-(ks+2)(2s-2) \\
&\ge ks(3ks+6)-(ks+2)(2s-2) \tag{by (\ref{n-k-1})} \\
&= (ks+2)(3ks-2s+2)  \\
&\ge (ks+2)(s+2) \tag{by $k\ge 1$}\\
&>0,
\end{align*}
a contradiction. Thus $q<ks+2$. It is obvious that $T\neq \emptyset$. Otherwise, by Ineq. (\ref{E1}),
\begin{equation*}
ks-q\leq -2,
\end{equation*}
a contradiction. Recalling the definition of $h_1$, it is clear that  $d_{G-S}(x)\geq h_1$ for every $x\in T$.
If $h_1\geq k$, then by Ineq. (\ref{E1}),
\begin{align*}
q\geq ks-kt+\sum_{x\in T}d_{G-S}(x)+2\geq ks-kt+kt+2=ks+2,
\end{align*}
a contradiction. Hence, we have $h_1\leq k-1$.

The following two claims are obvious:

\begin{claim}\label{k,h1+s}
$k \le \delta(G) \le d_G(u_1) \le d_{G-S}(u_1)+s = h_1+s$.
\end{claim}
\begin{claim}\label{p,k}
$p= d_T(u_1)+1 \le d_{G-S}(u_1)+1 = h_1+1\leq k$.
\end{claim}

Finally, we require the following important result:

\begin{claim}\label{s-lower}
If $V(U)\not\subseteq N_G(u_1)$, then
\begin{equation*}
s\geq k-h_1+3(q-1).
\end{equation*}
\end{claim}
\begin{proof}
Without loss of generality, take $x\in V(C_j)\subseteq V(U)$ for some $j\in [q]$, and suppose that $xu_1\notin E(G)$. Recall that $c_i\geq 3$ for every $i\in [q]$. Hence we have
\begin{equation}\label{cj}
c_j\leq n-s-t-\sum_{i\neq j}c_i\leq n-s-t-3(q-1).
\end{equation}
It is obvious that $N_G(x)\subseteq (V(C_j)\setminus\{x\})\cup S\cup (T\setminus \{u_1\})$. Furthermore, by Ineq. (\ref{cj}),
\begin{equation}
d_G(x)\leq c_j+s+t-2\leq n-s-t-3(q-1)+s+t-2=n-3(q-1)-2.
\end{equation}
By Claim \ref{k,h1+s}, $d_G(u_1)\leq h_1+s$. So we have
\begin{equation*}
n+k-2\leq d_G(x)+d_G(u_1)\leq n-3(q-1)-2+h_1+s.
\end{equation*}
That is,
\begin{equation*}
s\geq k-h_1+3(q-1).
\end{equation*}
The result thus holds.
\end{proof}

We now consider three cases to derive a contradiction.

\textbf{Case 1.} $T\setminus P= \emptyset$. That is, $T=P$.

Let $r=k-h_1$. By Claims \ref{h1}-\ref{p,k}, we have $r\geq 1$, $s\geq r$ and $t=p\leq k$. Furthermore, by Ineq. ({\ref{E1}}), we have
\begin{align}
q&\ge ks-kt+\sum_{x\in T}d_{G-S}(x)+2 \notag\\
&\ge ks-kt+h_1t+2 \notag \\
&=ks-rt+2 \label{ks-rt+2}\\
&\ge2. \tag{by $k\geq t$,\;$s\ge r$}
\end{align}
\begin{claim}\label{case1-claim}
 $V(U)\not\subseteq N_G(u_1)$.
\end{claim}
\begin{proof}
If $V(U)\subseteq N_G(u_1)$, then $h_1=d_{G-S}(u_1)\geq |V(U)|+t-1$, since $T=P$. Note that for every $x\in V(U)$,
$N_{G-S}(x)\subseteq (V(U)\setminus \{x\})\cup T$. Hence,
\begin{equation*}
d_{G-S}(x)\leq |V(U)|-1+t\leq h_1.
\end{equation*}
But by Ineq. (\ref{E3}), $d_{G-S}(x)\geq k+1$. That is, $h_1\geq k+1$, which contradicts Claim \ref{h1}.
\end{proof}
By Claims \ref{case1-claim} and \ref{s-lower}, we have $s\geq r+3(q-1)$. Furthermore, by Ineq. (\ref{ks-rt+2}),
\begin{align*}
q &\ge ks-rt+2 \\
  &\ge k\big(r+3(q-1)\big)-rk+2 \tag{by $s\ge r+3(q-1),\, t\le k$}\\
  &= 3k(q-1)+2\\
  &\geq 3(q-1)+2 \tag{by $k\geq 1$}\\
  &>q \tag{by $q\geq 2$},
\end{align*}
a contradiction. This completes the proof for Case 1.

\textbf{Case 2.} $T\setminus P\neq \emptyset$ and $h_2\geq k$.

Let $r=k-h_1$. By Claims \ref{h1}-\ref{p,k}, we have $r\geq 1$, $s\geq r$ and $p\leq h_1+1\leq k$. Recall that $h_2:=\min\{d_{G-S}(x):x\in T\backslash P\}$ if $T\setminus P\neq \emptyset$. By Ineq. ({\ref{E1}}), we have
\begin{align}\label{case2-q}
q &\ge ks-kt+\sum_{x\in P}d_{G-S}(x)+\sum_{x\in T\setminus P}d_{G-S}(x)+2 \notag \\
  &\ge ks-kt+h_1p+h_2(t-p)+2 \notag\\
  &\ge ks-kt+h_1p+k(t-p)+2 \tag{by $h_2\ge k$} \notag\\
  &= ks-rp+2 \label{ks-rp+2}\\
  &\ge r(k-p)+2 \label{r(k-p)+2}\\
  &\ge 2 \notag.
\end{align}
\begin{claim}\label{case-claim}
$V(U)\subseteq N_G(u_1)$.
\end{claim}
\begin{proof}
If $V(U)\not\subseteq N_G(u_1)$, then $s\geq r+3(q-1)$ by Claim \ref{s-lower}.  Furthermore, by Ineq. (\ref{ks-rp+2}),
\begin{align*}
q &\ge ks - rp + 2 \\
  &\ge k\big(r+3(q-1)\big)-rk+2 \tag{by $s\ge r+3(q-1),\, p\le k$}\\
  &= 3k(q-1)+2 \\
  &\ge 3(q-1)+2 \tag{by $k\ge 1$}\\
  &> q \tag{by $q\ge 2$},
\end{align*}
a contradiction. The claim thus holds.
\end{proof}
Since $V(U)\subseteq N_G(u_1)$ and $|V(U)|\ge 3q$, we have $h_1=d_{G-S}(u_1)\geq 3q+p-1$. That is,
\begin{equation}\label{case2-p}
p\leq h_1-3q+1.
\end{equation}
Furthermore, by Ineqs. (\ref{r(k-p)+2}) and ({\ref{case2-p}}), we have
\begin{equation*}
q\ge r(k-p)+2\ge r(k-h_1+3q-1)+2\geq (1+3q-1)+2>q,
\end{equation*}
where the third inequality holds since $r=k-h_1\geq1$, a contradiction. This completes the proof for Case 2.

\textbf{Case 3.} $T\setminus P\neq \emptyset$ and $h_2\leq k-1$.

Recall that $u_1u_2\notin E(G)$, so we have
\begin{equation*}
n+k-2\leq d_G(u_1)+d_G(u_2)\leq d_{G-S}(u_1)+s+d_{G-S}(u_2)+s=h_1+h_2+2s.
\end{equation*}
That is,
\begin{equation}\label{s-1}
s\geq \frac{n+k-2-h_1-h_2}{2}.
\end{equation}
On the other hand, by Claim \ref{k,h1+s},
\begin{equation}\label{s-2}
s\geq k-h_1.
\end{equation}
Let $\lambda=\frac{h_2}{2k-h_2}$. Then $0\leq \lambda <1$ since $h_2\leq k-1$, and
\begin{equation}\label{ww}
(2k-h_2)\lambda=h_2\;\;\mathrm{and}\;\; \frac{(2k-h_2)(1-\lambda)}{2}=k-h_2.
\end{equation}
By Ineqs. (\ref{s-1}) and (\ref{s-2}),
\begin{equation}\label{s-3}
s=\lambda s+(1-\lambda)s\geq \lambda (k-h_1)+(1-\lambda)\frac{n+k-2-h_1-h_2}{2}.
\end{equation}
Since $2k-h_2>0$, by Ineqs. (\ref{ww}) and (\ref{s-3}),
\begin{align}\label{s-4}
(2k-h_2)s&\ge (2k-h_2)\lambda (k-h_1)+\frac{(2k-h_2)(1-\lambda)}{2}(n+k-2-h_1-h_2) \notag\\
&=h_2(k-h_1)+(k-h_2)(n+k-2-h_1-h_2).
\end{align}
Note that $n\geq s+t+|V(U)|\geq s+t+3q$. That is,
\begin{equation}\label{case3-t}
t\leq n-s-3q.
\end{equation}
Obviously, $h_1-h_2\leq 0$. By Claim \ref{p,k}, $p\leq h_1+1$. Hence,
\begin{equation}\label{case3-h12}
(h_1-h_2)p\ge(h_1-h_2)(h_1+1).
\end{equation}
Furthermore, by Ineq. (\ref{E1}), we have
\begin{align*}
0 &\ge ks-kt+\sum_{x\in P}d_{G-S}(x)+\sum_{x\in T\setminus P}d_{G-S}(x)+2-q \\
  &\ge ks-kt+h_1p+h_2(t-p)+2-q \\
  &= ks-(k-h_2)t+(h_1-h_2)p+2-q \\
  &\ge ks-(k-h_2)(n-s-3q)+(h_1-h_2)p+2-q \tag{by (\ref{case3-t})}\\
  &\ge ks-(k-h_2)(n-s-3q)+(h_1-h_2)(h_1+1)+2-q \tag{by (\ref{case3-h12})} \\
  &= (2k-h_2)s-(k-h_2)n+\big(3(k-h_2)-1\big)q+(h_1-h_2)(h_1+1)+2 \\
  &\ge (2k-h_2)s-(k-h_2)n+(h_1-h_2)(h_1+1)+2.
\end{align*}
\end{proof}
Let $f=(2k-h_2)s-(k-h_2)n+(h_1-h_2)(h_1+1)+2$. Then $f\leq 0$. However,
\begin{align*}
f &= (2k-h_2)s-(k-h_2)n+(h_1-h_2)(h_1+1)+2 \\
  &\ge h_2(k-h_1)+(k-h_2)(n+k-2-h_1-h_2)-(k-h_2)n+(h_1-h_2)(h_1+1)+2\tag{by (\ref{s-4})}\\
  &= h_2(k-h_1)+(k-h_2)(k-2-h_1-h_2)+(h_1-h_2)(h_1+1)+2.
\end{align*}
Set $a=k-1-h_1$ and $b=k-1-h_2$. Then $a\geq b\geq0$. By elementary calculations,
\begin{align*}
 f&\ge h_2(k-h_1)+(k-h_2)(k-2-h_1-h_2)+(h_1-h_2)(h_1+1)+2\\
 &=1+a^2-ab+b^2\\
 &=1+(a-b)^2+ab\\
 &\geq 1,
\end{align*}
which contradicts $f\leq 0$. This completes the proof for Case 3, and hence the theorem follows.

\vspace{3mm}
\noindent\textbf{Declaration of competing interest}

There is no conflict of interest.

\noindent\textbf{Data availability}

No data was used for the research described in the article.

\end{sloppypar}

\begin{thebibliography}{99}

\bibitem{BM08}
J. A. Bondy, U. S. R. Murty, Graph theory, Graduate Texts in Mathematics, Springer,
New York, 2008.

\bibitem{Dirac1952}
G. A. Dirac, \textit{Some theorems on abstract graphs}, Proc. London. Math. Soc., 2 (1952), 69-81.

\bibitem{EE1989}
Y. Egawa, H. Enomoto, \textit{Sufficient conditions for the existence of $k$-factors}, Recent Studies in Graph Theory, V. R. Kulli, Ed., Vishwa International Publication, India (1989) 96-105.

\bibitem{IN1991}
T. Iida, T. Nishimura, \textit{An Ore-type condition for the existence of $k$-factors in graphs}, Graphs and Combinat., 7 (1991), 353-361.

\bibitem{K1985}
P. Katerinis, \textit{Minimum degree of a graph and the existence of $k$-factors}, Proc. Indian Acad. Sci (Math. Sci.), 94 (1985), 123-127.

\bibitem{KW1987}
P. Katerinis, D. R. Woodall, \textit{Binding numbers of graphs and the existence of $k$-factors}, Quart. J. Math., 38 (1987), 221-228.

\bibitem{LN2019}
H. L. Lu, B. Ning, \textit{An Ore-type condition for large $k$-factor and disjoint perfect matchings}, J. Graph Theory, 94 (2020), 307-319.

\bibitem{N1992}
T. Nishimura, \textit{A degree condition for the existence of $k$-factors}, J. Graph Theory, 16 (1992), 141-151.

\bibitem{Ore1960}
O. Ore, \textit{Note on Hamilton circuits}, Am. Math. Mon., 67 (1960), 55.

\bibitem{Tutte1952}
W. T. Tutte, \textit{The factors of graphs}, Canad. J. Math., 4 (1952), 314-328.

\end{thebibliography}
\end{document}